\theoremstyle{plain}
\newtheorem{theorem}{Theorem}[section]
\newtheorem{corollary}[theorem]{Corollary}
\newtheorem{lemma}[theorem]{Lemma}
\newtheorem{proposition}[theorem]{Proposition}
\theoremstyle{definition}
\theoremstyle{remark}
\newtheorem*{remark}{Remark}
\numberwithin{equation}{section}
\newcommand{\CC}{{\mathbb C}}
\newcommand{\DD}{{\mathbb D}}
\newcommand{\RR}{{\mathbb R}}
\newcommand{\TT}{{\mathbb T}}
\let\Re\undefined
\DeclareMathOperator{\Re}{Re}
\DeclareMathOperator{\td}{teardrop}
\DeclareMathOperator{\conv}{conv}
\begin{document}

\title{On mapping theorems for numerical range}

\author[Klaja]{Hubert Klaja}
\address{D\'epartement de math\'ematiques et de statistique, Universit\'e Laval, 
1045 avenue de la M\'edecine, Qu\'ebec (QC), Canada G1V 0A6}
\email{hubert.klaja@gmail.com}

\author[Mashreghi]{Javad Mashreghi}
\address{D\'epartement de math\'ematiques et de statistique, Universit\'e Laval, 
1045 avenue de la M\'edecine, Qu\'ebec (QC), Canada G1V 0A6}
\email{javad.mashreghi@mat.ulaval.ca}
\thanks{Second author supported by NSERC}

\author[Ransford]{Thomas Ransford}
\address{D\'epartement de math\'ematiques et de statistique, Universit\'e Laval, 
1045 avenue de la M\'edecine, Qu\'ebec (QC), Canada G1V 0A6}
\email{ransford@mat.ulaval.ca}
\thanks{Third author  supported by NSERC and the Canada research chairs program.}

\subjclass[2010]{Primary 47A12, Secondary  15A60}

\date{April 24, 2015}

\commby{??}

\begin{abstract}
Let $T$ be an operator on a Hilbert space $H$ with numerical radius  $w(T)\le1$. 
According to a theorem of Berger and Stampfli, 
if $f$ is a function in the disk algebra such that $f(0)=0$, 
then $w(f(T))\le\|f\|_\infty$. 
We give a new and elementary proof of this result using finite Blaschke products.

A well-known result relating numerical radius and norm says $\|T\| \leq 2w(T)$. 
We obtain a local  improvement of this estimate, namely, if $w(T)\le1$ then
\[
\|Tx\|^2\le 2+2\sqrt{1-|\langle Tx,x\rangle|^2}
\qquad(x\in H,~\|x\|\le1).
\]
Using this refinement, 
we give a simplified proof of Drury's teardrop theorem, 
which extends the Berger--Stampfli theorem to the case $f(0)\ne0$.
\end{abstract}

\maketitle

\section{Introduction}

Let $H$ be a complex Hilbert space
and  $T$ be a bounded linear operator on~$H$.
The \emph{numerical range} of $T$ is defined by
\[
W(T):=\{\langle Tx,x\rangle: x\in H,~\|x\|=1\}.
\]
It is a convex set
whose closure contains the spectrum of $T$. 
If $\dim H<\infty$, then $W(T)$ is compact.
The \emph{numerical radius} of $T$ is defined by
\[
w(T):=\sup\{|\langle Tx,x\rangle|: x\in H,~\|x\|=1\}.
\]
It is related to the operator norm via 
the   double inequality
\begin{equation}\label{E:nrnorm}
\|T\|/2\le w(T) \le \|T\|.
\end{equation}
If further $T$ is self-adjoint, then $w(T)=\|T\|$.
For proofs of these facts and further background on numerical range 
we refer to the book of Gustafson and Rao \cite{GR97}.

This paper arose from an attempt to gain 
a better understanding of mapping theorems for numerical ranges.
In contrast with spectra, 
it is not true in general that $W(p(T))=p(W(T))$ for polynomials $p$, 
nor is it true if we take convex hulls of both sides. 
However, some partial results do hold. 
Perhaps the most famous of these is the power inequality: 
for all $n\ge1$, we have
\[
w(T^n)\le w(T)^n. 
\]
This was conjectured by Halmos and, after  several  partial results, 
was established by Berger \cite{Be65} using dilation theory. 
An elementary proof was given by Pearcy in \cite{Pe66}. 
A more general result was established by Berger and Stampfli in \cite{BS67}
for functions in the disk algebra
(namely functions that are continuous on the closed unit disk and holomorphic on 
the open unit disk).
They showed that, if $w(T)\le1$, then, for all $f$ in the disk algebra 
such that $f(0)=0$, we have
\[
w(f(T))\le\|f\|_\infty.
\] 
Again their proof  used dilation theory.
In \S\ref{S:Berger} below, we give an elementary proof of this result 
along the lines of Pearcy's proof of the power inequality.

The assumption that $f(0)=0$ is essential in the Berger--Stampfli theorem, 
as is shown by an example in \cite{BS67}.
Without this assumption, the situation becomes more complicated. 
The best result in this setting is Drury's teardrop theorem \cite{Dr08},
which will be discussed in detail in \S\ref{S:Drury} below.
At the heart of the teardrop theorem is an operator inequality, 
which Drury proved by citing a decomposition theorem of Dritschel and Woerdeman \cite{DW97}, 
and then performing some  complicated calculations.
It turns out that these difficulties can be circumvented 
by exploiting a refinement of the inequality \eqref{E:nrnorm}.
We establish this refinement in \S\ref{S:nrineq} 
and show how it can be used to simplify Drury's argument in \S\ref{S:Drury}. 
In \S\ref{S:conclusion} we make some concluding remarks.

\section{An elementary proof of the Berger--Stampfli mapping theorem}\label{S:Berger}

In this section we present an elementary proof 
of the aforementioned theorem of Berger and Stampfli. 
Here is the formal statement of the theorem.

\begin{theorem}\label{T:Berger}
Let $H$ be a complex Hilbert space, 
let $T$ be a bounded linear operator on $H$ with $w(T)\le1$, 
and let $f$ be a function in the disk algebra such that $f(0)=0$. 
Then $w(f(T))\le\|f\|_\infty$.
\end{theorem}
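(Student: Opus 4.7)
The plan is to follow the hint in the abstract and treat the theorem as a generalization of Pearcy's elementary proof of the power inequality (which is the case $B(z)=z^n$). The argument splits naturally into two reductions followed by one core estimate.

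First, I would reduce to polynomials. If $f$ lies in the disk algebra with $f(0)=0$, the Fej\'er means $\sigma_N(f)$ of its Taylor series are polynomials with $\sigma_N(f)(0)=0$, $\|\sigma_N(f)\|_\infty\le\|f\|_\infty$, and $\sigma_N(f)\to f$ uniformly on $\overline{\DD}$. Hence by continuity of the polynomial functional calculus it is enough to treat polynomials $p$ with $p(0)=0$ and $\|p\|_\infty\le 1$. Next I would appeal to a classical representation theorem saying that every such polynomial is an average $p=(B_1+\cdots+B_m)/m$ of finite Blaschke products $B_j$ with $B_j(0)=0$. Since $w$ is subadditive, this reduces the theorem to showing $w(B(T))\le 1$ for each finite Blaschke product $B$ with $B(0)=0$.

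The core step is then to bound $w(B(T))$ for $B(z)=z\prod_{j=1}^{n-1}(z-a_j)/(1-\bar a_j z)$ with $|a_j|<1$. After the harmless rescaling $T\mapsto rT$ with $r<1$, which guarantees that every $I-\bar a_j T$ is invertible (the final conclusion then extending to $r=1$ by continuity), $B(T)$ is well-defined. My plan is to find an identity expressing the scalar $1-|\langle B(T)x,x\rangle|^2$ as a sum of nonnegative quantities built from the resolvents $(I-\bar a_j T)^{-1}$, together with the positivity $\operatorname{Re}\langle(I-\zeta T)^{-1}y,y\rangle\ge\tfrac12\|y\|^2$ for $|\zeta|\le 1$ (which is equivalent to $w(T)\le 1$). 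A Cauchy--Schwarz estimate applied factor by factor should then close the argument.

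The principal obstacle is this last step. Pearcy's telescoping identity for $z^n$ does not obviously survive the introduction of M\"obius factors: the resolvents $(I-\bar a_j T)^{-1}$ must be commuted past one another and past $T$ in a controlled way, and the correct telescoping identity --- one that reduces to Pearcy's when all $a_j=0$ --- is not immediate. Identifying it, and using the fact that Blaschke products have modulus exactly $1$ on $\partial\DD$ to turn a multiplicative structure into an additive positivity, is where the real work will lie.
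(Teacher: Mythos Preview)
Your proposal has a genuine gap at the core step, and the difficulty you flag is real because you are searching for the wrong identity. Pearcy's argument for $B(z)=z^n$ is not a telescoping identity on $1-|\langle T^n x,x\rangle|^2$; it is a \emph{partial-fraction} identity
\[
\frac{1}{1-\overline{\gamma}z^n}=\frac{1}{n}\sum_{k=1}^n\frac{1}{1-\overline{\zeta}_kz},
\qquad \zeta_k^n=\gamma,
\]
used to show $\Re(1-\overline{\gamma}\langle T^n x,x\rangle)\ge0$ for each $\gamma\in\TT$. The correct generalization to an arbitrary finite Blaschke product $B$ with $B(0)=0$ is therefore not a product/telescope along the M\"obius factors but the analogous partial-fraction decomposition
\[
\frac{1}{1-\overline{\gamma}B(z)}=\sum_{k=1}^n\frac{c_k}{1-\overline{\zeta}_kz},
\qquad c_k>0,\ \zeta_k\in\TT,
\]
where the $\zeta_k$ are the (simple) preimages $B^{-1}(\gamma)$ and $c_k=B(\zeta_k)/(\zeta_kB'(\zeta_k))>0$. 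This is exactly the paper's key lemma. With it in hand, one writes $1-\overline{\gamma}\langle B(T)x,x\rangle=\sum_k c_k\bigl(\|z_k\|^2-\overline{\zeta}_k\langle Tz_k,z_k\rangle\bigr)$ for suitable $z_k$, and $w(T)\le1$ makes every summand have nonnegative real part. No commutation of resolvents and no multiplicative structure enter; the problem is linearized from the outset. Your proposed route via $1-|\langle B(T)x,x\rangle|^2$ and factorwise Cauchy--Schwarz would require controlling products of noncommuting resolvents, and there is no evident identity that does this.

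On the reductions: the ``classical representation theorem'' you invoke (that any polynomial $p$ with $p(0)=0$ and $\|p\|_\infty\le1$ is an \emph{exact} finite average of finite Blaschke products fixing~$0$) is not a standard fact; what is classical is approximation. The paper bypasses both your Fej\'er-mean step and this representation by applying Carath\'eodory's theorem directly: any Schur function is a local-uniform limit in $\DD$ of finite Blaschke products, which can be taken to fix $0$ if $f(0)=0$. After first replacing $T$ by $rT$ to force $\sigma(T)\subset\DD$ (as you also suggest), local-uniform convergence suffices for $B_n(T)\to f(T)$ in norm, and the general case follows by letting $r\to1^-$.
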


We require two folklore lemmas about finite Blaschke products.
Let us write $\DD$ for the open unit disk and $\TT$ for the unit circle.

\begin{lemma}\label{L:Blaschke1}
Let $B$ be a finite Blaschke product. 
Then $\zeta B'(\zeta)/B(\zeta)$ is real and strictly positive for all $\zeta\in\TT$.
\end{lemma}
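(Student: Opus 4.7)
The plan is to reduce to the case of a single Blaschke factor by logarithmic differentiation, then to exploit the identity $1-\bar{a}\zeta = \zeta\overline{(\zeta-a)}$ valid for $\zeta\in\TT$.

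First I would write
\[
B(z) = c\prod_{k=1}^n \frac{z-a_k}{1-\bar{a}_k z},
\]
where $|c|=1$ and $a_1,\dots,a_n\in\DD$, and observe that
\[
\frac{\zeta B'(\zeta)}{B(\zeta)} = \sum_{k=1}^n \frac{\zeta\, b_{a_k}'(\zeta)}{b_{a_k}(\zeta)},
\qquad b_a(z):=\frac{z-a}{1-\bar{a}z}.
\]
Thus it suffices to show that each summand is a strictly positive real number; the conclusion will then follow since a finite sum of strictly positive reals is strictly positive.

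Next I would compute, for a single factor,
\[
\frac{b_a'(z)}{b_a(z)} = \frac{1}{z-a} + \frac{\bar{a}}{1-\bar{a}z} = \frac{1-|a|^2}{(z-a)(1-\bar{a}z)}.
\]
Multiplying by $\zeta$ and using the key identity $1-\bar{a}\zeta = \zeta\,\overline{(\zeta-a)}$ for $\zeta\in\TT$, the denominator becomes $(\zeta-a)\cdot\zeta\,\overline{(\zeta-a)}=\zeta|\zeta-a|^2$, so
\[
\frac{\zeta b_a'(\zeta)}{b_a(\zeta)} = \frac{1-|a|^2}{|\zeta-a|^2},
\]
which is manifestly real and strictly positive since $|a|<1$ and $\zeta\ne a$.

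There is no serious obstacle here; the only point requiring a little care is verifying the identity $(\zeta-a)(1-\bar{a}\zeta)=\zeta|\zeta-a|^2$ on $\TT$, which follows immediately from $\bar{\zeta}=1/\zeta$. Summing the positive quantities $(1-|a_k|^2)/|\zeta-a_k|^2$ over $k=1,\dots,n$ finishes the proof.
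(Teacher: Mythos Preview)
Your proof is correct and follows essentially the same approach as the paper: logarithmic differentiation of the Blaschke product, reducing to the single-factor formula $(1-|a_k|^2)/\bigl((z-a_k)(1-\bar a_k z)\bigr)$, and then simplifying on $\TT$ to $(1-|a_k|^2)/|\zeta-a_k|^2$. The only difference is that you spell out the identity $1-\bar a\zeta=\zeta\,\overline{(\zeta-a)}$ explicitly, whereas the paper passes directly to the final expression.
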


\begin{proof}
We can write
\[
B(z)=c\prod_{k=1}^n\frac{a_k-z}{1-\overline{a}_kz},
\]
where $a_1,\dots,a_n\in\DD$ and $c\in\TT$. Then
\[
\frac{B'(z)}{B(z)}
=\sum_{k=1}^n\frac{1-|a_k|^2}{(z-a_k)(1-\overline{a}_kz)}.
\]
In particular, if $\zeta\in\TT$, then
\[
\frac{\zeta B'(\zeta)}{B(\zeta)}
=\sum_{k=1}^n\frac{1-|a_k|^2}{|\zeta-a_k|^2},
\]
which is real and strictly positive.
\end{proof}

\begin{lemma}\label{L:Blaschke2}
Let $B$ be a Blaschke product of degree $n$ such that $B(0)=0$. 
Then, given $\gamma\in\TT$,
there exist $\zeta_1,\dots,\zeta_n\in\TT$
and $c_1,\dots,c_n>0$ such that
\begin{equation}\label{E:Blaschke2}
\frac{1}{1-\overline{\gamma}B(z)}
=\sum_{k=1}^n\frac{c_k}{1-\overline{\zeta}_kz}.
\end{equation}
\end{lemma}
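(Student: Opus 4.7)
The plan is to obtain \eqref{E:Blaschke2} by a partial-fraction decomposition of the rational function $\varphi(z):=1/(1-\overline{\gamma}B(z))$.

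First I would locate its poles, namely the solutions of $B(z)=\gamma$. No solution lies in $\DD$ by the maximum principle ($|B|<1$ there, but $|\gamma|=1$); and no solution lies in $\{|z|>1\}$, since the reflection identity $B(1/\overline{z})=1/\overline{B(z)}$ satisfied by every finite Blaschke product would then produce a solution in $\DD$. Writing $B=P/Q$ in lowest terms, the hypothesis $B(0)=0$ forces $\deg P>\deg Q$, so $Q-\overline{\gamma}P$ has degree exactly $n$. Consequently $B=\gamma$ has $n$ roots on $\TT$ counted with multiplicity, and by Lemma \ref{L:Blaschke1} the derivative $B'$ is nonzero on $\TT$, so these roots are simple; denote them $\zeta_1,\dots,\zeta_n\in\TT$. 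The same degree inequality yields $\varphi(\infty)=0$, so $\varphi$ has no pole at infinity either.

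A standard partial-fraction argument then produces a purely polar expansion
\[
\varphi(z)=\sum_{k=1}^n\frac{A_k}{z-\zeta_k},\qquad A_k=\operatorname{Res}_{z=\zeta_k}\varphi=\frac{-\gamma}{B'(\zeta_k)}.
\]
Since $|\zeta_k|=1$, the elementary identity $1/(1-\overline{\zeta}_k z)=-\zeta_k/(z-\zeta_k)$ converts this expression into the form \eqref{E:Blaschke2} with
\[
c_k=-\frac{A_k}{\zeta_k}=\frac{\gamma}{\zeta_k B'(\zeta_k)}=\frac{B(\zeta_k)}{\zeta_k B'(\zeta_k)},
\]
where the last equality uses $B(\zeta_k)=\gamma$. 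Positivity of each $c_k$ is now immediate from Lemma \ref{L:Blaschke1}, which asserts precisely that $\zeta_k B'(\zeta_k)/B(\zeta_k)>0$.

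The main obstacle is the careful bookkeeping at the first step: showing that $B=\gamma$ has exactly $n$ simple roots, all located on $\TT$, and that $\varphi$ vanishes at infinity so that no polynomial part appears in the partial-fraction expansion. Once this is in place, the residue calculation and the invocation of Lemma \ref{L:Blaschke1} are routine.
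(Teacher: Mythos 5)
Your proof is correct and follows essentially the same route as the paper's: a partial-fraction expansion of $1/(1-\overline{\gamma}B)$ at the $n$ simple roots of $B=\gamma$ on $\TT$, with no polynomial part because $B(0)=0$ forces the function to vanish at infinity, and positivity of the coefficients from Lemma~\ref{L:Blaschke1}. You merely supply more detail than the paper (the degree count via $B=P/Q$ and the location of the roots on $\TT$), and compute $c_k$ via residues at $z=\zeta_k$ rather than directly as a limit; the resulting formula $c_k=B(\zeta_k)/(\zeta_kB'(\zeta_k))$ is identical.
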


\begin{proof}
Given $\gamma\in\TT$, 
the roots of the equation $B(z)=\gamma$ lie on the unit circle, 
and by Lemma~\ref{L:Blaschke1} they are simple. 
Call them $\zeta_1,\dots,\zeta_n$.
Then $1/(1-\overline{\gamma}B)$ has simple poles at the $\zeta_k$. 
Also, as $B(0)=0$, we have $B(\infty)=\infty$ 
and so $1/(1-\overline{\gamma}B)$ vanishes at $\infty$. 
Expanding it in partial fractions gives \eqref{E:Blaschke2}, 
for some choice of $c_1,\dots,c_n\in\CC$.

The coefficients $c_k$ are easily evaluated. 
Indeed, from \eqref{E:Blaschke2} we have
\[
c_k
=\lim_{z\to\zeta_k}\frac{1-\overline{\zeta}_kz}{1-\overline{\gamma}B(z)}
=\lim_{z\to\zeta_k}\frac{(\zeta_k-z)/\zeta_k}{(B(\zeta_k)-B(z))/B(\zeta_k)}
=\frac{B(\zeta_k)}{\zeta_kB'(\zeta_k)}.
\]
In particular $c_k>0$ by Lemma~\ref{L:Blaschke1}.
\end{proof}

\begin{proof}[Proof of Theorem~\ref{T:Berger}]
Suppose first that $f$ is a finite Blaschke product $B$. 
Suppose also that the spectrum $\sigma(T)$ of $T$ 
lies within the open unit disk $\DD$. 
By the spectral mapping theorem 
$\sigma(B(T))=B(\sigma(T))\subset\DD$ as well.
Let $x\in H$ with $\|x\|=1$. 
Given $\gamma\in\TT$, 
let $\zeta_1,\dots,\zeta_n\in\TT$ and $c_1,\dots,c_n>0$
as in Lemma~\ref{L:Blaschke2}. Then we have
\begin{align*}
1-\overline{\gamma}\langle B(T)x,x\rangle
&=\langle (I-\overline{\gamma}B(T))x,x\rangle\\
&=\langle y,(I-\overline{\gamma}B(T))^{-1}y\rangle 
&&\text{where~}y:=(I-\overline{\gamma}B(T))x\\
&=\Bigl\langle y,\sum_{k=1}^nc_k (I-\overline{\zeta}_kT)^{-1}y\Bigr\rangle
&&\text{by~}\eqref{E:Blaschke2}\\
&=\sum_{k=1}^n c_k\langle(I-\overline{\zeta}_kT)z_k,z_k\rangle 
&&\text{where~}z_k:=(I-\overline{\zeta}_kT)^{-1}y\\
&=\sum_{k=1}^n c_k(\|z_k\|^2-\overline{\zeta}_k\langle Tz_k,z_k\rangle).
\end{align*}
Since $w(T)\le1$, we have 
$\Re(\|z_k\|^2-\overline{\zeta}_k\langle Tz_k,z_k\rangle)\ge0$, 
and as $c_k>0$ for all $k$, it follows that 
\[
\Re(1-\overline{\gamma}\langle B(T)x,x\rangle)\ge0.
\]
As this holds for all $\gamma\in\TT$ and all $x$ of norm $1$, 
it follows that $w(B(T))\le1$.

Next we relax the assumption on $f$, 
still assuming that $\sigma(T)\subset\DD$. 
We can suppose that $\|f\|_\infty=1$. 
Then there exists a sequence of finite Blaschke products $B_n$ 
that converges locally uniformly to $f$ in $\DD$. 
(This is Carath\'eodory's theorem: a simple proof can be found in \cite[\S1.2]{Ga07}.) 
Moreover, as $f(0)=0$, we can also arrange that $B_n(0)=0$ for all $n$.
By what we have proved, $w(B_n(T))\le1$ for all $n$.
Also $B_n(T)$ converges in norm to $f(T)$, because $\sigma(T)\subset\DD$. 
It follows that $w(f(T))\le1$, as required.

Finally we relax the assumption that $\sigma(T)\subset\DD$.
By what we have already proved, 
$w(f(rT))\le\|f\|_\infty$ for all $r<1$. 
Interpreting $f(T)$ as $\lim_{r\to1^-}f(rT)$, 
it follows that  $w(f(T))\le\|f\|_\infty$, 
\emph{provided} that this limit exists. 
In particular this is true when $f$ is holomorphic in a neighborhood of $\overline{\DD}$. 
To prove the existence of the limit in the general case, 
we proceed as follows. 
Given $r,s\in(0,1)$, 
the function $g_{rs}(z):=f(rz)-f(sz)$ 
is holomorphic in a neighborhood of $\overline{\DD}$ 
and vanishes at~$0$, 
so, by what we have already proved, 
$w(g_{rs}(T))\le\|g_{rs}\|_\infty$. Therefore,
\[
\|f(rT)-f(sT)\|=\|g_{rs}(T)\|\le 2w(g_{rs}(T))\le 2\|g_{rs}\|_\infty.
\]
The right-hand side tends to zero as $r,s\to1^{-}$, so,
by the usual Cauchy-sequence argument, 
$f(rT)$ converges as $r\to1^-$. 
This completes the proof.
\end{proof}

\section{A local inequality relating norm to numerical radius}\label{S:nrineq}

Let $T$ be a bounded  operator on a Hilbert space $H$ and let $x\in H$.
The left-hand inequality in \eqref{E:nrnorm} amounts to saying  
that $\|Tx\|\le2$ whenever $w(T)\le 1$ and $\|x\|\le1$.
In this section we establish the following local refinement.

\begin{theorem}\label{T:nrineq}
If $w(T)\le1$ and $\|x\|\le1$, then
\begin{equation}\label{E:nrineq}
\|Tx\|^2\le 2+2\sqrt{1-|\langle Tx,x\rangle|^2}.
\end{equation}
\end{theorem}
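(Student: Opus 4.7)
My plan is to use Ando's characterisation of numerical radius contractions: $w(T)\le 1$ is equivalent to the existence of a self-adjoint operator $Z$ on $H$ with $-I\le Z\le I$ such that
\[
\begin{pmatrix}I+Z & T \\ T^* & I-Z\end{pmatrix}\ge 0
\]
as an operator on $H\oplus H$. A short scaling argument lets me reduce to $\|x\|=1$ (the case $\|x\|<1$ follows because both sides behave monotonically under the rescaling $x\mapsto x/\|x\|$), so I assume $\|x\|=1$; set $a:=\langle Tx,x\rangle$ and $\zeta:=\langle Zx,x\rangle$, where $Z$ is supplied by Ando's theorem.

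I would then extract two complementary consequences from the block positivity. Testing it against vectors of the form $\mu x\oplus \nu x$ with $\mu,\nu\in\CC$ reduces the positivity to the scalar matrix inequality
\[
\begin{pmatrix}1+\zeta & a \\ \bar a & 1-\zeta\end{pmatrix}\ge 0,
\]
whose non-negative determinant gives $|a|^2\le 1-\zeta^2$, and hence $1-\zeta \le 1+\sqrt{1-|a|^2}$. In the complementary direction, a Schur complement of the block operator (applied after the regularisation $I+Z\mapsto I+Z+\varepsilon I$, with $\varepsilon\downarrow 0$, to handle possible non-invertibility) yields the operator inequality $T^*(I+Z)^{-1}T\le I-Z$; evaluating at $x$ gives $\langle(I+Z)^{-1}Tx,Tx\rangle \le 1-\zeta$. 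Because $0\le I+Z\le 2I$ forces $(I+Z)^{-1}\ge \tfrac12 I$, we also have $\tfrac12\|Tx\|^2 \le \langle(I+Z)^{-1}Tx,Tx\rangle$. Chaining the two estimates produces $\tfrac12\|Tx\|^2 \le 1+\sqrt{1-|a|^2}$, which is exactly the claimed inequality.

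The main obstacle is invoking Ando's characterisation, which is itself a non-trivial theorem (closely linked to Berger's unitary $2$-dilation theorem); with that tool in hand, everything else is a short computation with the block operator. The only genuine technicality is the $\varepsilon$-regularisation needed to form the Schur complement when $I+Z$ is not invertible, which is routine.
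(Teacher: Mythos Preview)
Your argument via Ando's characterisation is correct: testing the block positivity on $\mu x\oplus\nu x$ gives $|a|^2\le 1-\zeta^2$, hence $1-\zeta\le 1+\sqrt{1-|a|^2}$, while the Schur complement (with the $\varepsilon$-regularisation) gives $\tfrac{1}{2+\varepsilon}\|Tx\|^2\le 1-\zeta$, and letting $\varepsilon\downarrow0$ and chaining yields the claim. (It is slightly cleaner to run the Schur complement and the bound $(I+Z+\varepsilon I)^{-1}\ge(2+\varepsilon)^{-1}I$ at the scalar level for fixed $\varepsilon>0$, obtaining $\tfrac{1}{2+\varepsilon}\|Tx\|^2\le 1-\zeta$ directly, rather than first passing to an operator inequality involving $(I+Z)^{-1}$.)

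The paper takes a quite different and entirely elementary route: write $T=A+iB$ with $A=(T+T^*)/2$ and $B=(T-T^*)/2i$ self-adjoint, normalise so that $\langle Tx,x\rangle\ge0$, and combine the triangle inequality
\[
\|Tx-\langle Tx,x\rangle x\|\le\|Ax-\langle Ax,x\rangle x\|+\|Bx-\langle Bx,x\rangle x\|
\]
with Pythagoras and the fact that $\|A\|,\|B\|\le1$ (since $w=\|\cdot\|$ on self-adjoints). Your route is clean and makes the role of the Ando witness $Z$ transparent, even yielding the sharper intermediate bound $\tfrac12\|Tx\|^2\le 1-\langle Zx,x\rangle$; but it imports a structural theorem of depth comparable to Berger's dilation theorem, which is precisely what the paper is trying to avoid. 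The paper's Cartesian-decomposition argument, by contrast, is completely self-contained.
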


\begin{proof}
We may as well suppose that $\|x\|=1$. 
Multiplying $T$ by a unimodular scalar, 
we may further suppose that $\langle Tx,x\rangle\ge0$.
Set $A:=(T+T^*)/2$ and $B:=(T-T^*)/2i$.
By the triangle inequality, we then have
\[
\|Tx-\langle Tx,x\rangle x\|
\le \|Ax-\langle Ax,x\rangle x\|+\|Bx-\langle Bx,x\rangle x\|.
\]
Now, by Pythagoras' theorem,
\[
\|Tx-\langle Tx,x\rangle x\|^2=\|Tx\|^2-|\langle Tx,x\rangle|^2,
\]
and likewise for $A$ and $B$.
Also $A$ and $B$ are self-adjoint operators and have numerical radius at most $1$, 
so $\|A\|\le1$ and $\|B\|\le1$.
Further, the condition $\langle Tx,x\rangle\ge0$ implies that
$\langle Ax,x\rangle=\langle Tx,x\rangle$ and $\langle Bx,x\rangle=0$. Hence
\begin{align*}
\|Ax-\langle Ax,x\rangle x\|^2&=\|Ax\|^2-|\langle Ax,x\rangle|^2\le 1-|\langle Tx,x\rangle|^2\\
\intertext{and}
\|Bx-\langle Bx,x\rangle x\|^2&=\|Bx\|^2-|\langle Bx,x\rangle|^2\le 1.
\end{align*}
Combining all these inequalities, we obtain
\[
\sqrt{\|Tx\|^2-|\langle Tx,x\rangle|^2}
\le \sqrt{1-|\langle Tx,x\rangle|^2}+1,
\]
which, after simplification, yields \eqref{E:nrineq}.
\end{proof}

From Theorem~\ref{T:nrineq} we derive the following operator inequality. 
This result will be needed in the next section.

\begin{theorem}\label{T:operineq}
If $w(T)\le1$, then
\begin{equation}\label{E:operineq}
I+t(T+T^*)+(t^2-1/4)T^*T\ge0 
\qquad(t\in[0,1/2]).
\end{equation}
\end{theorem}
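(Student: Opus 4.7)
The plan is to pair the operator inequality against a unit vector $x$, use Theorem~\ref{T:nrineq} to bound $\|Tx\|^2$, and finish by recognising the resulting scalar expression as a perfect square.

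Specifically, \eqref{E:operineq} is equivalent to showing that
\[
1 + 2t\Re\langle Tx,x\rangle + (t^2-1/4)\|Tx\|^2 \ge 0
\]
for every $x\in H$ with $\|x\|=1$. Writing $\alpha:=\langle Tx,x\rangle$, we have $|\alpha|\le w(T)\le 1$ and $\Re\alpha\ge -|\alpha|$, so the left-hand side is bounded below by
\[
1 - 2t|\alpha| + (t^2-1/4)\|Tx\|^2.
\]
Since $t\in[0,1/2]$ forces $t^2-1/4\le 0$, Theorem~\ref{T:nrineq} lets me replace $\|Tx\|^2$ by its upper bound $2+2\sqrt{1-|\alpha|^2}$ at the price of a further decrease. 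The problem thus reduces to the purely scalar inequality
\[
1 - 2t|\alpha| + (t^2-1/4)\bigl(2+2\sqrt{1-|\alpha|^2}\bigr) \ge 0
\qquad (t\in[0,1/2],~|\alpha|\in[0,1]).
\]

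The main obstacle I anticipate is verifying this scalar inequality uniformly in $t$ and $|\alpha|$; a brute-force calculus approach looks unpleasant. Instead I would substitute $s:=\sqrt{1-|\alpha|^2}\in[0,1]$, so that $|\alpha|=\sqrt{(1-s)(1+s)}$, and, after expanding $(t^2-1/4)(2+2s)$ and collecting terms, rewrite the left-hand side as
\[
\frac{1-s}{2} + 2t^2(1+s) - 2t\sqrt{(1-s)(1+s)}.
\]
I expect this to collapse cleanly to the perfect square
\[
\left(\sqrt{\tfrac{1-s}{2}} - t\sqrt{2(1+s)}\right)^{\!2},
\]
which is manifestly nonnegative and completes the argument. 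The whole proof then amounts to a reduction to scalars, one appeal to Theorem~\ref{T:nrineq}, and spotting this completion of the square.
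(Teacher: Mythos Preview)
Your argument is correct. The reduction to the scalar inequality is valid, and the expression
\[
\frac{1-s}{2} + 2t^2(1+s) - 2t\sqrt{(1-s)(1+s)}
\]
does indeed factor as the square you wrote, so nonnegativity follows at once.

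The paper follows the same overall strategy---pair against a unit vector, invoke Theorem~\ref{T:nrineq}, and complete a square---but organises the computation differently. Rather than first replacing $\Re\langle Tx,x\rangle$ by $-|\langle Tx,x\rangle|$ and $\|Tx\|^2$ by its upper bound from Theorem~\ref{T:nrineq}, the paper retains the exact quantities and splits into two cases. When $\|Tx\|^2\le 2$ it replaces $\|Tx\|^2$ by $2$ and completes the square as $2\bigl|t+\tfrac{1}{2}\langle Tx,x\rangle\bigr|^2+\tfrac{1}{2}(1-|\langle Tx,x\rangle|^2)$, never touching Theorem~\ref{T:nrineq}; when $\|Tx\|^2>2$ it squares the inequality from Theorem~\ref{T:nrineq} and completes the square in a different way that keeps $\|Tx\|$ explicit. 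Your route is tidier in that it avoids the case split altogether, at the small price of the substitution $s=\sqrt{1-|\alpha|^2}$; the paper's version, conversely, shows that Theorem~\ref{T:nrineq} is needed only for vectors with $\|Tx\|^2>2$.
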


\begin{proof}
The  inequality \eqref{E:operineq} says that, 
for all $x\in H$ with $\|x\|=1$, we have
\[
1+2t\Re\langle Tx,x\rangle+(t^2-1/4)\|Tx\|^2\ge0
\qquad(t\in[0,1/2]).
\]
To prove this, we consider two cases. First, if $\|Tx\|^2\le2$,
then, for all $t\in[0,1/2]$,
\begin{align*}
1+2t\Re\langle Tx,x\rangle+(t^2-1/4)\|Tx\|^2
&\ge 1+2t\Re\langle Tx,x\rangle +2(t^2-1/4)\\
&=2\Bigl|t+\frac{\langle Tx,x\rangle}{2}\Bigr|^2+\frac{1-|\langle Tx,x\rangle|^2}{2}\ge0.
\end{align*}
The other possibility is that $\|Tx\|^2>2$. In this case,
writing \eqref{E:nrineq} in the form 
$\|Tx\|^2-2\le2\sqrt{1-|\langle Tx,x\rangle|^2}$ 
and squaring both sides, we get
\[
4\|Tx\|^2-\|Tx\|^4-4|\langle Tx,x\rangle|^2\ge0.
\]
Then, for all $t\in[0,1/2]$, we have
\begin{align*}
1+&2t\Re\langle Tx,x\rangle+(t^2-1/4)\|Tx\|^2\\
&=\|Tx\|^2\Bigl|t+\frac{\langle Tx,x\rangle}{\|Tx\|^2}\Bigr|^2+
\frac{4\|Tx\|^2-\|Tx\|^4-4|\langle Tx,x\rangle|^2}{4\|Tx\|^2}\ge0.\qedhere
\end{align*}
\end{proof}

\section{Teardrops and Drury's theorem}\label{S:Drury}

If we formulate the Berger--Stampfli theorem as a mapping theorem, 
it says that, whenever $f:\overline{\DD}\to\overline{\DD}$ 
belongs to the disk algebra and satisfies $f(0)=0$, 
we have
\[
W(T)\subset\overline{\DD}\quad\Rightarrow\quad W(f(T))\subset\overline{\DD}.
\]
Without the assumption that $f(0)=0$, this is no longer true.
In this case, the best result is a theorem due to Drury \cite{Dr08}. 
To state his result, we need to introduce some terminology.

Given $\alpha\in\overline{\DD}$, we define the `teardrop region'  
\[
\td(\alpha):=\conv\Bigl(\overline{D}(0,1)\cup\overline{D}(\alpha,1-|\alpha|^2)\Bigr),
\]
namely,  the convex hull of the union of the closed unit disk 
and the closed disk of center $\alpha$ and radius $1-|\alpha|^2$ 
(see Figure~\ref{F:teardrop}).

\begin{figure}[t]
\includegraphics[width=0.5\textwidth]{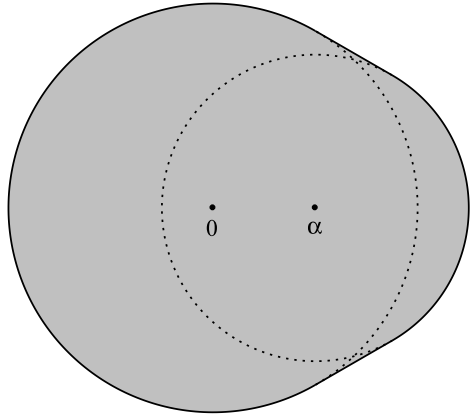}
\caption{teardrop($\alpha$)}\label{F:teardrop}
\end{figure}

Drury's theorem can now be stated as follows.

\begin{theorem}\label{T:Drury}
Let $T$ be an operator on a Hilbert space $H$ such that 
$W(T)\subset\overline{\DD}$, 
and let $f:\overline{\DD}\to\overline{\DD}$ be a function in the disk algebra. 
Then
\[
W(f(T))\subset \td(f(0)).
\] 
\end{theorem}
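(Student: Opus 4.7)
The plan is to reduce Drury's theorem to Theorem~\ref{T:Berger} via the Schur algorithm, and then close the resulting inequality using Theorem~\ref{T:operineq}. As in the proof of Theorem~\ref{T:Berger}, a scaling and limiting argument lets us assume $\sigma(T)\subset\DD$, so that $f(T)$ is well-defined and the inverses below exist. By the Schur algorithm, any $f$ in the disk algebra with $f:\overline{\DD}\to\overline{\DD}$ and $f(0)=\alpha$ factors as $f(z)=(\alpha+zh(z))/(1+\overline{\alpha}\,zh(z))$ for some $h$ in the disk algebra with $\|h\|_\infty\le 1$. Setting $S:=Th(T)$ and applying Theorem~\ref{T:Berger} to $g(z):=zh(z)$ (which vanishes at $0$ and has $\|g\|_\infty\le1$) yields $w(S)\le 1$, and
\[
f(T)=(\alpha I+S)(I+\overline{\alpha}S)^{-1}.
\]
Thus it suffices to prove that $W((\alpha I+S)(I+\overline{\alpha}S)^{-1})\subset\td(\alpha)$ whenever $w(S)\le 1$.

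For this I use the supporting half-plane description $\td(\alpha)=\bigcap_{\gamma\in\TT}\{w:\Re(\overline{\gamma}w)\le d^*(\gamma)\}$, where $d^*(\gamma):=\max(1,\,1-|\alpha|^2+\Re(\overline{\gamma}\alpha))$. Fix a unit vector $x$ and $\gamma\in\TT$, set $y:=(I+\overline{\alpha}S)^{-1}x$, and denote $a:=\|y\|^2$, $b:=\|Sy\|^2$, $c:=\langle Sy,y\rangle$, $\beta:=\overline{\gamma}\alpha$. The constraint $\|x\|^2=1$ reads $a+|\alpha|^2b+2\Re(\overline{\alpha}c)=1$, and a direct expansion gives
\[
\Re(\overline{\gamma}\langle f(T)x,x\rangle)=\Re\beta+(1-|\alpha|^2)\bigl[\Re(\overline{\gamma}c)+(\Re\beta)\,b\bigr].
\]
The desired bound $\Re(\overline{\gamma}\langle f(T)x,x\rangle)\le d^*(\gamma)$ then becomes a scalar inequality in $a,b,c,\beta$ subject to the above constraint and to the operator bounds coming from $w(S)\le 1$.

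In the case $\Re\beta\ge|\alpha|^2$, substituting the constraint reduces this to
\[
\Re\bigl((\overline{\gamma}-2\overline{\alpha})c\bigr)+(\Re\beta-|\alpha|^2)\,b\le a,
\]
and I expect this to follow from a single application of Theorem~\ref{T:operineq} to $e^{i\theta}S$ at $y/\|y\|$: taking $t:=|1-2\beta|/2\in[0,1/2]$ (the bound $t\le 1/2$ being precisely the case hypothesis, since $t^2=\tfrac{1}{4}-\Re\beta+|\alpha|^2$) and $e^{i\theta}:=-(\overline{\gamma}-2\overline{\alpha})/(2t)$, the inequality $a+2t\Re(e^{i\theta}c)+(t^2-1/4)b\ge 0$ rearranges to exactly the desired form. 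In the complementary case $\Re\beta<|\alpha|^2$, this same choice of $t$ exceeds $1/2$ and a single application of Theorem~\ref{T:operineq} no longer suffices; one must combine it at $t=1/2$ (which recovers only $|c|\le a$) with the Cauchy--Schwarz bound $|c|^2\le ab$ and the equality constraint to close the estimate. This second case is the main obstacle: the first admits a clean one-shot proof via Theorem~\ref{T:operineq}, whereas the second demands careful bookkeeping of several constraints on $a,b,c$ to complete the argument.
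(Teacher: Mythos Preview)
Your overall architecture coincides with the paper's: reduce to the automorphism $\phi_\alpha$ via Theorem~\ref{T:Berger} (the paper writes $g=\phi_\alpha^{-1}\circ f$ rather than invoking Schur, but the effect is identical), describe $\td(\alpha)$ by supporting half-planes, pass to the vector $y=(I+\overline\alpha S)^{-1}x$ (this is exactly the paper's conjugation by $B=I+\alpha T$), and split into two cases. Your treatment of the case $\Re\beta\ge|\alpha|^2$ is correct and is precisely the paper's use of Theorem~\ref{T:operineq}: same $t=|1-2\beta|/2\in[0,1/2]$, same unimodular rotation, same conclusion.

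The genuine gap is the case $\Re\beta<|\alpha|^2$, which you leave as ``careful bookkeeping''. The missing idea is short and is the content of Case~2 of Theorem~\ref{T:S}: for $t\in[1/2,1]$ and any operator $T$ with $w(T)\le1$ one has the identity
\[
I+t(T+T^*)+(2t-1)T^*T=(2t-1)(I+T)^*(I+T)+(1-t)\bigl(2I+T+T^*\bigr),
\]
a sum of two nonnegative terms (the second because $w(T)\le1$). In your scalar language this is $a+2t\Re(e^{i\theta}c)+(2t-1)b=(2t-1)\|y+e^{i\theta}Sy\|^2+2(1-t)(a+\Re(e^{i\theta}c))\ge0$, so your listed ingredient $|c|\le a$ already suffices and Cauchy--Schwarz is not needed. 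What remains is to check that, for this half-plane family, the target inequality $\Re(\overline\gamma\langle\phi_\alpha(S)x,x\rangle)\le 1$ rearranges (after the same substitution $y$) to $Q(\omega S,t,2t-1)\ge0$ for a suitable unimodular $\omega$ and a value $t\in[1/2,1]$; the paper carries out this computation explicitly, and the pair $(t,s)$ here is \emph{not} your $(|1-2\beta|/2,\,t^2-1/4)$ from the first case but rather lies on the line $s=2t-1$. Once you see the convex-combination identity above, the second case closes in one line rather than requiring any delicate estimation.
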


This has the following immediate consequence.

\begin{corollary}\label{C:Drury}
Under the same hypotheses,
\[
w(f(T))\le 1+|f(0)|-|f(0)|^2\le 5/4.
\]
\end{corollary}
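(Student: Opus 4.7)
The plan is to read off the corollary directly from Drury's theorem by computing the maximum modulus of a point in $\td(f(0))$. Write $\alpha:=f(0)$. Since $W(f(T))\subset\td(\alpha)$ by Theorem~\ref{T:Drury}, we have
\[
w(f(T))=\sup\{|z|:z\in W(f(T))\}\le\sup\{|z|:z\in\td(\alpha)\}.
\]
So the task reduces to evaluating $\max_{z\in\td(\alpha)}|z|$.

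The key observation is that $z\mapsto|z|$ is convex, hence its maximum over a compact convex set equals its maximum over the extreme points, and the extreme points of $\conv(A\cup B)$ lie in $A\cup B$. Therefore the supremum over the teardrop equals the supremum over the union $\overline{D}(0,1)\cup\overline{D}(\alpha,1-|\alpha|^2)$. First, I would compute the two separate maxima: $\max_{\overline{D}(0,1)}|z|=1$ and $\max_{\overline{D}(\alpha,1-|\alpha|^2)}|z|=|\alpha|+(1-|\alpha|^2)$. Since $|\alpha|\le1$ gives $|\alpha|(1-|\alpha|)\ge0$, the second quantity is at least $1$, and so the overall maximum is $1+|\alpha|-|\alpha|^2$. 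This yields the first inequality of the corollary.

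For the second inequality, I would simply maximize the real function $g(t):=1+t-t^2$ over $t\in[0,1]$; differentiating gives a unique critical point at $t=1/2$, where $g(1/2)=5/4$, which is the global maximum on $[0,1]$. Applying this with $t=|\alpha|=|f(0)|\in[0,1]$ (recall $f:\overline{\DD}\to\overline{\DD}$) concludes the proof.

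There is no real obstacle here; the only substantive step is the convexity-of-modulus argument that reduces the computation on the convex hull to a computation on two disks, and even that is a standard fact about convex functions on compact convex sets.
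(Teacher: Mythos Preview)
Your argument is correct and is precisely the intended one: the paper simply declares the corollary an ``immediate consequence'' of Theorem~\ref{T:Drury} and gives no further details, so your computation of $\max_{z\in\td(\alpha)}|z|=1+|\alpha|-|\alpha|^2$ together with the elementary optimization over $|\alpha|\in[0,1]$ is exactly what is being left to the reader.
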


The rationale for these results, which also demonstrates their sharpness, 
is discussed by Drury in \cite{Dr08}. 
Our purpose here is to show how our results in the preceding sections 
fit into the proof of Theorem~\ref{T:Drury}.

Following Drury, we define
\begin{align*}
&Q(T,t,s):=I+t(T+T^*)+sT^*T,\\
&S:=\Bigl\{(t,s)\in\RR^+\times\RR: 
~w(T)\le1~\Rightarrow ~Q(T,t,s)\ge0\Bigr\}.
\end{align*}
In a section entitled `the key issue', 
Drury gives the following  description of $S$.

\begin{theorem}\label{T:S}
The region $S$ is specified by the following inequalities:
\[
\begin{cases}
s\ge t^2-1/4, &\text{if~}0\le t\le1/2,\\
s\ge 2t-1, &\text{if~}1/2\le t\le 1,\\
s\ge t^2, &\text{if~}t\ge1.
\end{cases}
\]
\end{theorem}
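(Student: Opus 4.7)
The plan is to establish both inclusions: that the stated inequalities are \emph{sufficient} (so that $Q(T,t,s)\ge 0$ whenever $w(T)\le 1$ and the appropriate inequality holds), and that they are \emph{sharp} (each boundary piece is saturated by some concrete operator). A useful preliminary observation is that $Q(T,t,s')-Q(T,t,s)=(s'-s)T^*T$, so positivity is monotone in $s$, and in each $t$-range it suffices to handle $s$ equal to the stated boundary value.

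For sufficiency I would take the three $t$-ranges in turn. In the range $t\in[0,1/2]$ with $s=t^2-1/4$, the required positivity is exactly Theorem~\ref{T:operineq}. In the range $t\ge 1$ with $s=t^2$, the unconditional factorization
\[
Q(T,t,t^2)=(I+tT^*)(I+tT)\ge 0
\]
holds for every operator $T$. For the middle range $t\in[1/2,1]$ with $s=2t-1$, I would verify by matching coefficients the identity
\[
Q(T,t,2t-1)=2(1-t)\bigl(I+\tfrac{1}{2}(T+T^*)\bigr)+(2t-1)(I+T^*)(I+T),
\]
which exhibits $Q(T,t,2t-1)$ as a nonnegative combination of two nonnegative operators; the first summand is nonnegative because $w(T)\le 1$ forces $-2I\le T+T^*\le 2I$, and the second is of the form $A^*A$.

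For necessity I would exhibit explicit witnesses saturating each boundary. Scalar operators handle the last two ranges: $T=-I$ gives $Q(T,t,s)=(1-2t+s)I$, forcing $s\ge 2t-1$; and for $t\ge 1$, $T=-I/t$ (with $w(T)=1/t\le 1$) gives $Q(T,t,s)=(s/t^2-1)I$, forcing $s\ge t^2$. The first range is more delicate, because $t^2-1/4$ can be negative and no scalar example can saturate a negative value of $s$. The natural candidate is the extremal Jordan block $T=\bigl(\begin{smallmatrix}0&2\\0&0\end{smallmatrix}\bigr)$, which has $w(T)=1$; a direct computation gives
\[
Q(T,t,s)=\begin{pmatrix}1 & 2t\\ 2t & 1+4s\end{pmatrix},
\]
which is positive semidefinite iff $1+4s-4t^2\ge 0$, i.e.\ iff $s\ge t^2-1/4$. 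The main obstacle is identifying this $2\times 2$ witness for the first range; once it is in hand, the rest of the argument reduces to coefficient matching and routine verification.
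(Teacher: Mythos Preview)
Your proposal is correct and follows essentially the same route as the paper: the same three-range split, the same use of Theorem~\ref{T:operineq} for $0\le t\le 1/2$, the same convex decomposition for $1/2\le t\le 1$, the factorization $(I+tT)^*(I+tT)$ for $t\ge 1$, and the same witnesses (the Jordan block $\bigl(\begin{smallmatrix}0&2\\0&0\end{smallmatrix}\bigr)$ and $T=-I$). Your choice of $T=-I/t$ in the last range is in fact slightly tidier than the paper's $T=-(t/s)I$, since it covers all $s<t^2$ in one stroke rather than only $t\le s<t^2$.
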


A picture of $S$ is given in  Figure~\ref{F:Druryregion}.

\begin{figure}[t]
\includegraphics[width=0.5\textwidth]{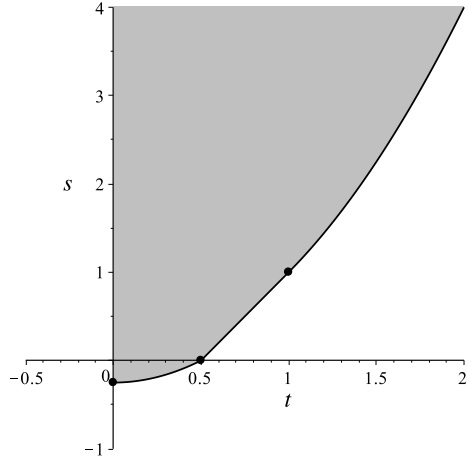}
\caption{The region $S$}\label{F:Druryregion}
\end{figure}

\begin{proof}
We divide the argument into three cases, according to the value of $t$.

\emph{Case 1}: $0\le t\le 1/2$. 
In this case, Theorem~\ref{T:operineq} shows that, 
if $s\ge t^2-1/4$, then, for all $T$ with $w(T)\le1$,  
\[
Q(T,t,s)\ge I+t(T+T^*)+(t^2-1/4)T^*T\ge0.
\] 
On the other hand, if 
$s<t^2-1/4$ and 
$T:=
\begin{pmatrix}
0 &2\\0 &0
\end{pmatrix}$,
then $w(T)\le1$ and
\[
Q(T,t,s)=
\begin{pmatrix}
1 &2t \\ 2t & 1+4s
\end{pmatrix}
\not\ge0,
\]
because it has a negative determinant.
Thus, for this range of values of $t$,
we have $(t,s)\in S\iff s\ge t^2-1/4$.

\emph{Case 2:} $1/2\le t\le 1$.
In this case, if $s\ge2t-1$, then, for all $T$ with $w(T)\le1$,
\begin{align*}
Q(T,t,s)
&\ge I+t(T+T^*)+(2t-1)T^*T\\
&=(1-t)(2I-(T+T^*))+(2t-1)(I+T)^*(I+T)\ge0.
\end{align*}
On the other hand, if $s<2t-1$ and $T:=-I$, then $w(T)\le1$ and
\[
Q(T,t,s)=(1-2t+s)I\not\ge0.
\]
Therefore, for this range of values of $t$,
we have $(t,s)\in S\iff s\ge2t-1$.

\emph{Case 3:} $t\ge1$.
In this case, if $s\ge t^2$, then, for all $T$ with $w(T)\le1$,
\begin{align*}
Q(T,t,s)
&\ge I+t(T+T^*)+t^2T^*T\\
&=(I+tT)^*(I+tT)\ge0.
\end{align*}
On the other hand, if $t\le s<t^2$ and $T:=-(t/s)I$,
then $w(T)\le1$ and
\[
Q(T,t,s)=(1-t^2/s)I\not\ge0.
\]
Thus, for this range of values of $t$,
we have $(t,s)\in S\iff s\ge t^2$.
\end{proof}

\begin{remark}
The main novelty in the proof above 
is the use of Theorem~\ref{T:operineq} in Case~1, 
which shortens the argument considerably.
\end{remark}

\begin{proof}[Proof of Theorem~\ref{T:Drury}]
We follow the method of Drury, with a few details added.

Set $\alpha:=f(0)$. We can suppose that $|\alpha|<1$,
otherwise $f$ is constant and the whole result becomes trivial. 
Let $\phi_\alpha$ be the disk automorphism defined by
\[
\phi_\alpha(z):=\frac{\alpha+z}{1+\overline{\alpha}z},
\]
and set $g:=\phi_\alpha^{-1}\circ f$.
Then $g$ belongs to the disk algebra, $\|g\|_\infty\le1$ and $g(0)=0$. 
By Theorem~\ref{T:Berger} we have $W(g(T))\subset\overline{\DD}$. 
Since $f=\phi_\alpha\circ g$, we may proceed by replacing $T$ by $g(T)$ 
and just studying the case $f=\phi_\alpha$.  
As $\phi_\alpha(T)=\phi_{|\alpha|}(e^{-i\arg\alpha}T)$, 
we may also assume that $\alpha\in[0,1)$. 

Now $\td(\alpha)$ is the intersection of the two families of half-planes
\[
\{z: \Re(e^{-i\theta}z)\le 1\} ~~(\cos\theta\le\alpha)
\quad\text{and}\quad
\{z:\Re(e^{-i\theta}(z-\alpha))\le 1-\alpha^2\}
~~(\cos\theta\ge\alpha).
\]
So, to show $W(\phi_\alpha(T))\subset\td(\alpha)$,
it suffices to prove that
\begin{equation}\label{E:ineq1}
\Re(e^{-i\theta}\phi_\alpha(T))\le I 
\qquad(\cos\theta\le\alpha)
\end{equation}
and 
\begin{equation}\label{E:ineq2}
\Re(e^{-i\theta}(\phi_\alpha(T)-\alpha I))\le (1-\alpha^2)I 
\qquad(\cos\theta\ge\alpha).
\end{equation}

We begin by proving \eqref{E:ineq1}.
This inequality is equivalent to
\[
2I-e^{-i\theta}\phi_\alpha(T)-e^{i\theta}\phi_\alpha(T^*)\ge0.
\]
Given  operators $A,B$ with $B$ invertible,
we have $A\ge0\iff B^*AB\ge0$.
Applying this with $A$ equal to the left-hand side above and $B:=(I+\alpha T)$, 
we see that the desired inequality is equivalent to
\[
2(1-\alpha\cos\theta)I
+(2\alpha-e^{i\theta}-\alpha^2e^{-i\theta})T
+(2\alpha-e^{-i\theta}-\alpha^2e^{i\theta})T^*
+2\alpha(\alpha-\cos\theta)T^*T\ge0.
\]
Set
\[
\omega:=
\frac{2\alpha-e^{i\theta}-\alpha^2e^{-i\theta}}{|2\alpha-e^{i\theta}-\alpha^2e^{-i\theta}|}
=\frac{2\alpha-e^{i\theta}-\alpha^2e^{-i\theta}}{1-2\alpha\cos\theta+\alpha^2}.
\]
Then we may rewrite the last inequality as
\[
2(1-\alpha\cos\theta)I
+(1-2\alpha\cos\theta+\alpha^2)(\omega T+(\omega T)^*)
+2\alpha(\alpha-\cos\theta)(\omega T)^*(\omega T)\ge0,
\]
or equivalently, $Q(\omega T,t,s)\ge0$, where
\[
t:=\frac{1-2\alpha\cos\theta+\alpha^2}{2(1-\alpha\cos\theta)}
\qquad\text{and}\qquad
s:=\frac{\alpha(\alpha-\cos\theta)}{1-\alpha\cos\theta}=2t-1.
\]
It is elementary to verify that, for $-1\le\cos\theta\le\alpha$,  
the parameter $t$ stays in the interval $[1/2,1]$. 
Hence, by Theorem~\ref{T:S}, we do indeed have $Q(\omega T,t,s)\ge0$. 
This establishes \eqref{E:ineq1}.

Now we turn to \eqref{E:ineq2}. 
This inequality is equivalent to
\[
2I-e^{-i\theta}\psi_\alpha(T)-e^{i\theta}\psi_\alpha(T^*)\ge0,
\]
where $\psi_\alpha(z):=z/(1+\alpha z)$.
As before, considering $B^*AB$ with $B:=(I+\alpha T)$,
we see that the preceding inequality is equivalent to
\[ 
2I+(2\alpha-e^{-i\theta})T
+(2\alpha-e^{i\theta}T^*)
+2\alpha(\alpha-\cos\theta)T^*T\ge0.
\]
Set
\[
\omega:=\frac{2\alpha-e^{-i\theta}}{|2\alpha-e^{-i\theta}|}
=\frac{2\alpha-e^{-i\theta}}{2\sqrt{\alpha(\alpha-\cos\theta)+1/4}}.
\]
Then we may rewrite the last inequality as 
\[
I+\sqrt{\alpha(\alpha-\cos\theta)+1/4}\bigl(\omega T+(\omega T)^*\bigr)
+\alpha(\alpha-\cos\theta)(\omega T)^*(\omega T)\ge0,
\]
or equivalently, $Q(\omega T,t,s)\ge0$, where
\[
t:=\sqrt{\alpha(\alpha-\cos\theta)+1/4}
\qquad\text{and}\qquad
s:=\alpha(\alpha-\cos\theta)=t^2-1/4.
\]
It is elementary to verify that, for $\alpha\le\cos\theta\le1$,
the parameter $t$ stays in the interval $[0,1/2]$.
Hence, by Theorem~\ref{T:S}, we do indeed have $Q(\omega T,t,s)\ge0$. 
This establishes \eqref{E:ineq2}, and completes the proof.
\end{proof}

\begin{remark}
The  part of the numerical range of $f(T)$ `sticking out' of the unit disk is governed by the inequality \eqref{E:ineq2},
which corresponds to the slice of $S$ where $0\le t\le1/2$, which is in turn determined by the operator inequality Theorem~\ref{T:operineq}.
\end{remark}

\section{Concluding remarks}\label{S:conclusion}

\subsection{Aleksandrov--Clark measures}
Lemma~\ref{L:Blaschke2} is a special case of a construction of Clark 
later generalized by Aleksandrov.
Let $f:\DD\to\DD$ be holomorphic with $f(0)=0$.
Then, given  $\gamma \in\TT$, 
there exists a probability measure $\mu_{\gamma}$ on  $\TT$ such that
\begin{equation}\label{E:AC}
\frac{1}{1-\overline{\gamma}f(z)} 
= \int_{\TT} \frac{d\mu_{\gamma}(\zeta)}{1-\overline{\zeta}z}
\qquad(z\in\DD).
\end{equation}
The measures $\mu_\gamma$ are known as Aleksandrov--Clark measures.
For details of their construction and an account of their properties, 
see for example \cite{PS06} and \cite{Sa07}.

It is possible to base a proof of Theorem~\ref{T:Berger}  
directly on the formula \eqref{E:AC}.  
Indeed, a proof very similar to this appears in the paper of Kato \cite{Ka65}.

\subsection{Reformulations of the inequality \eqref{E:nrineq}}

The inequality \eqref{E:nrineq} can be reformulated in various equivalent ways. 
We record two of them here. 

\begin{proposition}
Let $T$ be an operator on a Hilbert space $H$ and let $x\in H$.
If $w(T)\le1$ and $\|x\|\le1$, then
\begin{equation}\label{E:|sin|}
\|Tx\|\le \max\Bigl\{2|\sin\theta|,\sqrt{2}\Bigr\},
\end{equation}
where $\theta$ is the hermitian angle between $x$ and $Tx$.
\end{proposition}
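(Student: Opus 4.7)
The plan is to deduce the proposition directly from Theorem~\ref{T:nrineq}, reformulating the right-hand side in terms of the hermitian angle.

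First I would reduce to the case $\|x\|=1$. If $x=0$ there is nothing to prove; otherwise replacing $x$ by $x/\|x\|$ leaves the hermitian angle $\theta$ (which depends only on the directions of $x$ and $Tx$) unchanged and can only increase $\|Tx\|$. So it suffices to prove \eqref{E:|sin|} under the normalization $\|x\|=1$, in which case the definition of the hermitian angle gives
\[
|\langle Tx,x\rangle|=\|Tx\|\cos\theta.
\]

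Next I would invoke Theorem~\ref{T:nrineq}, which under the hypotheses $w(T)\le1$ and $\|x\|=1$ reads
\[
\|Tx\|^2-2\le 2\sqrt{1-|\langle Tx,x\rangle|^2}.
\]
Split into two cases according to the sign of the left-hand side. If $\|Tx\|^2\le2$, then trivially $\|Tx\|\le\sqrt{2}$, so \eqref{E:|sin|} holds. Otherwise $\|Tx\|^2>2$, and squaring both sides (legitimate since both sides are nonnegative) yields
\[
\|Tx\|^4-4\|Tx\|^2+4\le 4-4|\langle Tx,x\rangle|^2,
\]
i.e.\ $\|Tx\|^4+4|\langle Tx,x\rangle|^2\le 4\|Tx\|^2$. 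Substituting $|\langle Tx,x\rangle|^2=\|Tx\|^2\cos^2\theta$ and dividing by $\|Tx\|^2>0$ gives
\[
\|Tx\|^2\le 4-4\cos^2\theta=4\sin^2\theta,
\]
so $\|Tx\|\le 2|\sin\theta|$, and again \eqref{E:|sin|} holds.

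There is essentially no obstacle here; the content of the proposition is entirely in Theorem~\ref{T:nrineq}, and the only work is the elementary algebraic rearrangement of the square-root estimate into the form $\|Tx\|\le 2|\sin\theta|$ on the regime where $\|Tx\|^2>2$. One sanity check worth flagging in the writeup is that the two regimes glue together correctly at the threshold $\|Tx\|=\sqrt{2}$: at that value, the second case forces $\sin^2\theta\ge1/2$, which is consistent with $\max\{2|\sin\theta|,\sqrt{2}\}=\sqrt{2}$ being attained on both sides.
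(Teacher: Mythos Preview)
Your proof is correct and follows essentially the same route as the paper: reduce to $\|x\|=1$, invoke Theorem~\ref{T:nrineq}, substitute $|\langle Tx,x\rangle|=\|Tx\|\cos\theta$, and split into the two cases $\|Tx\|^2\le2$ and $\|Tx\|^2>2$, squaring in the latter. The only differences are cosmetic---you spell out the normalization step and the gluing sanity check a bit more explicitly than the paper does.
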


\begin{proof}
By definition of hermitian angle, 
$|\langle Tx,x\rangle|=\|Tx\|\|x\|\cos\theta$.
We can suppose that $\|x\|=1$.
Substituting into \eqref{E:nrineq}, we get
$\|Tx\|^2-2\le 2\sqrt{1-\|Tx\|^2\cos^2\theta}$.
If $\|Tx\|^2\ge 2$, 
then we may square both sides to obtain
\[
\|Tx\|^4-4\|Tx\|^2+4\le 4-4\|Tx\|^2\cos^2\theta,
\]
which leads to $\|Tx\|\le 2|\sin\theta|$. 
On the other hand, if $\|Tx\|^2<2$, 
then obviously $\|Tx\|< \sqrt{2}$. 
Either way, \eqref{E:|sin|} holds.
\end{proof}

\begin{proposition}
If the  matrix
$\begin{pmatrix} a &b\\c &d\end{pmatrix}$
has numerical radius at most $1$,
then
\[
|c|\le 1+\sqrt{1-|a|^2}.
\]
\end{proposition}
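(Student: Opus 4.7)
The plan is to apply Theorem~\ref{T:nrineq} directly to the standard basis vector $x=e_1=(1,0)^T$. Since $\|e_1\|=1$, we first compute
\[
Tx=\begin{pmatrix}a\\c\end{pmatrix},\qquad \|Tx\|^2=|a|^2+|c|^2,\qquad \langle Tx,x\rangle=a.
\]
The hypothesis $w(T)\le1$ then makes Theorem~\ref{T:nrineq} applicable, yielding
\[
|a|^2+|c|^2\le 2+2\sqrt{1-|a|^2}.
\]

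Next, I would rearrange this to isolate $|c|^2$ and complete the square on the right. Writing $u:=\sqrt{1-|a|^2}$, so that $|a|^2=1-u^2$, the inequality becomes
\[
|c|^2\le 2-|a|^2+2\sqrt{1-|a|^2}=1+u^2+2u=(1+u)^2.
\]
Taking square roots gives $|c|\le 1+u=1+\sqrt{1-|a|^2}$, as desired.

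There is essentially no obstacle: the local refinement of \eqref{E:nrnorm} provided by Theorem~\ref{T:nrineq} is precisely tailored to control off-diagonal entries of matrices in terms of diagonal entries, and evaluating on $e_1$ reduces the statement to a one-line algebraic simplification. The only thing worth noting is that the argument uses nothing about the other entries $b$ and $d$, which reflects the fact that the bound on $|c|$ depends only on $|a|$; by symmetry (applied to $T^*$, whose numerical radius also equals that of $T$), the same reasoning bounds $|b|$ in terms of $|a|$ or $|d|$.
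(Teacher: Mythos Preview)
Your proof is correct and follows exactly the paper's approach: apply Theorem~\ref{T:nrineq} with $x=(1,0)$ to obtain $|a|^2+|c|^2\le 2+2\sqrt{1-|a|^2}$, then simplify. The paper merely writes ``after simplification, this gives the result,'' whereas you spell out the completion of the square via $u=\sqrt{1-|a|^2}$, but the argument is the same.
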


\begin{proof}
Applying Theorem~\ref{T:nrineq} with $H=\CC^2$ and $x=(1,0)$,
we obtain
\[
|a|^2+|c|^2\le 2+2\sqrt{1-|a|^2}.
\]
After simplification, this gives the result.
\end{proof}

We mention in passing that there are complete characterizations of operators $T$ such that $w(T)\le1$;
see Ando \cite{An73}.

\subsection{Extension to general domains}

The papers \cite{BS67} and \cite{Ka65} contain some partial extensions of
Theorem~\ref{T:Berger} to certain domains other than the disk. 

More recently, Crouzeix~\cite{Cr07} has shown that, 
if $T$ is any Hilbert-space operator and
$f$ is holomorphic on a neighborhood of $\overline{W(T)}$, then
\begin{equation}\label{E:Crouzeix}
\|f(T)\|\le C\sup_{z\in W(T)}|f(z)|,
\end{equation}
where $C$ is an absolute constant satisfying $C\le 11.08$.
It is conjectured that \eqref{E:Crouzeix} holds with $C=2$.
This is  best possible, as can be seen by considering the matrix
\begin{equation}\label{E:Texample}
T=
\begin{pmatrix}
0 &2\\0 &0 
\end{pmatrix},
\end{equation}
which satisfies $W(T)=\overline{\DD}$ and $\|T\|=2$.

Of course inequality \eqref{E:Crouzeix} implies the numerical-range mapping inequality
\begin{equation}\label{E:Crouzeix2}
w(f(T))\le C\sup_{z\in W(T)}|f(z)|.
\end{equation}
However, in the light of Corollary~\ref{C:Drury},
it is conceivable that the best constant $C$ in \eqref{E:Crouzeix2} 
is actually smaller than $2$.
The best that we can hope for is $C=5/4$.
Indeed, taking $T$ as in \eqref{E:Texample} and $f(z):=(1-2z)/(2-z)$,
we have 
\[
\sup_{z\in W(T)}|f(z)|=\sup_{z\in\overline{\DD}}\Bigl|\frac{1-2z}{2-z}\Bigr|=1,
\]
while $f(T)=I/2-3T/4$, 
which has numerical range $\overline{D}(1/2,3/4)$, 
so $w(f(T))=5/4$.

\bibliographystyle{amsplain}

\end{document}